\tikzset{notestyleraw/.append style={align=justify}}
\crefname{enumi}{item}{items}
\crefname{subsection}{Subsection}{Subsections}
\newtheorem{theorem}{Theorem}[section]
\newtheorem{definition}[theorem]{Definition}
\newtheorem{corollary}[theorem]{Corollary}
\newtheorem{lemma}[theorem]{Lemma}
\newtheorem{setting}[theorem]{Setting}
\numberwithin{equation}{section}
\newcounter{todocounter}
\newcommand{\N}{\ensuremath{\mathbb{N}}}
\newcommand{\R}{\ensuremath{\mathbb{R}}}
\newcommand{\C}{\ensuremath{\mathbb{C}}}
\newcommand{\logNorm}{\mu_X}
\newcommand{\semigroup}{\mathbb{T}}
\newcommand{\real}{\mathfrak{R}}
\newcommand{\id}{\mathbb{I}_X}
\newcommand{\domain}{D}
\newcommand{\resolve}{\rho_X}
\newcommand{\wright}{\Psi}
\newcommand{\mittag}{E}
\newcommand{\fractional}{S}
\newcommand{\with}{\curvearrowleft}
\DeclarePairedDelimiter{\pr}{(}{)}
\DeclarePairedDelimiter{\br}{[}{]}
\DeclarePairedDelimiter{\cu}{\{}{\}}
\DeclarePairedDelimiter{\abs}{\lvert}{\rvert}
\DeclarePairedDelimiter{\norm}{\lVert}{\rVert_X}
\DeclarePairedDelimiter{\inner}{[}{]_X}
\begin{document}

\title{Intrinsic properties of strongly continuous fractional semigroups in normed vector spaces}

\author{
Tiffany Frug{\'e} Jones$^1$,
Joshua Lee Padgett$^{2,3}$, and Qin Sheng$^{3,4}$
\bigskip
\\
\small{$^1$ Department of Mathematics, The University of Arizona,}
\vspace{-0.1cm}\\
\small{Tucson, Arizona, USA, e-mail: \texttt{tnjones@math.arizona.edu}}
\smallskip
\\
\small{$^2$ Department of Mathematical Sciences, University of Arkansas,}
\vspace{-0.1cm}\\
\small{Fayetteville, Arkansas, USA, e-mail: \texttt{padgett@uark.edu}}
\smallskip
\\
\small{$^3$ Center for Astrophysics, Space Physics, and Engineering Research,}
\vspace{-0.1cm}\\
\small{Baylor University, Waco, Texas, USA}
\smallskip
\\
\small{$^4$ Department of Mathematics, Baylor University,}
\vspace{-0.1cm}\\
\small{Waco, Texas, USA, e-mail: \texttt{qin\_sheng@baylor.edu}}
\smallskip
}

\date{\today}

\maketitle

\begin{abstract}
Norm estimates for strongly continuous semigroups have been successfully studied in numerous settings, but at the moment there are no corresponding studies in the case of solution operators of singular integral equations. Such equations have recently garnered a large amount of interest due to their potential to model numerous physically relevant phenomena with increased accuracy by incorporating so-called non-local effects. In this article, we provide the first step in the direction of providing such estimates for a particular class of operators which serve as solutions to certain integral equations. The provided results hold in arbitrary normed vector spaces and include the classical results for strongly continuous semigroups as a special case.
\end{abstract}


\tableofcontents


\section{Introduction}
\label{sec:intro}

The use of operator theory has been proven to be extremely effective in numerical analysis. It allows 
for the development of robust tools for applications far outside of the realms of traditional
computational mathematics. While these tools have been important and are well understood in 
cases of classical abstract Cauchy problems---which may be used to represent continuous and discrete 
problems, alike---there is currently little known about their effectiveness when abstract singular integral 
problems are presented. In particular, there is no accountable analysis tools available 
if the generating operators considered are not sectorial.

Researchers in numerical analysis have recently benefited from a surge of activities 
which overlaps with pure mathematical analysis (see, for instance, 
\cite{iserles2000lie,%
munthe2018lie,curry2017post,orthogonal2006iserles,%
padgett2020analysis,padgett2019convergence,%
arnold2010finite}
and publications cited therein). Inspired by this fact and the recent works of Littlejohn and Wellman concerning the investigation of self-adjoint operators in extended 
Hilbert spaces (cf., e.g., \cite{lance2019,lance2016,lance2002}), in this article we consider so-called \textit{strongly continuous fractional semigroups} in arbitrary normed vector spaces.
The work of Littlejohn and Wellman has added to the existing notion that understanding operators in a more abstract setting can yield extremely insightful results regarding their spectral properties.
The understanding of such properties is integral to the study of numerical algorithms as an entire family, rather than on a case-by-case basis (cf., e.g., \cite{sheng1993}).
While this task can be difficult in and of itself, the situation is further complicated by the fact that one is also often interested in estimating such families in a non-Hilbert space setting. 
In this case, it is not so obvious how one can immediately obtain analogous results due to the lack of structure exhibited by many such spaces.

Adding to the aforementioned difficulties, there has been a recent trend in considering operators which exhibit so-called \textit{non-local features} (see, for instance,
\cite{padgett2020analysis,padgett2020anomalous,%
Gale2013,%
doi:10.1080/03605302.2017.1363229,doi:10.1080/03605300600987306} and references therein).
The Cauchy problems associated to these operators no longer exhibit solutions which are semigroups, which complicates any ensuing analysis. 
Therefore, even in the classical Hilbert space setting or settings with bounded operators, one cannot employ classical techniques to obtain an understanding of the spectral or norm properties of the solution operator.
This difficulty is a primary motivation of the current study.

To this end, herein, we employ techniques which allow for the development of results which mirror the spectral results often derived in Hilbert spaces for non-local problems in arbitrary normed vector spaces.
While such problems have numerous open questions associated to them, we currently focus on the task of developing norm estimates of the solution operators (which will contain classical strongly continuous semigroups as a special case).
This goal is accomplished by introducing a particular semi-inner-product on a given normed vector space. 
This semi-inner-product differs from the classical ones originally studied by Lumer (cf.\ \cite{lumer1961semi,giles1967classes,%
dragomir2004semi}), but exhibit desirable properties which allow for the development of the desired norm bounds (cf.\ \cref{lem:cont,lem:nice_props}).
A nice consequence of these newly derived results is briefly outlined in \cref{cor:main}.
An important fact worth mentioning is that \cref{lem:nice_props,cor:main} demonstrate that while the non-local problems have solution operators which lack certain desired features, it is the case that their norm estimates may be viewed as continuous perturbations of the classical setting with the perturbations being comparable to the amount of non-locality present (e.g., the value of $\alpha \in (0,1]$ in \cref{cor:main}).

The remainder of this article is organized as follows. In \cref{sec:2} we introduce a particular notion of a semi-inner-product on arbitrary normed vector spaces and introduce its associated so-called logarithmic norm. In \cref{sec:2_3} we use these ideas to prove norm growth bounds on strongly continuous semigroups (cf.\ \cref{def:c0}). In \cref{sec:4} we introduce a notion of fractional semigroups, whose construction depend upon the classical Mittag-Leffler functions (cf.\ \cref{def:mittag,def:sol1}). Thereafter, these notions are then used to prove analogous growth bounds for strongly continuous fractional semigroups in \cref{sec:4_2}. Finally, in \cref{sec:final} we provide some concluding remarks on the subject and outline potential future research directions.


\section{Background}\label{sec:2}

For the purposes of the ensuing analysis, a particular semi-inner-product is defined on arbitrary normed vector spaces. Through it, an
associated logarithmic norm is introduced. The necessary concepts of strongly continuous semigroups and their 
generators will be introduced and studied. An interesting growth bound result of such semigroups employing the logarithmic norm
will be proven (cf.\ \cref{lem:cont}). Our result appears in various forms in the known literature, although, to our knowledge, no formal proof has been provided which is independent of higher regularity in infinite-dimensional spaces (cf., e.g., \cite{soderlind2006logarithmic,dahlquist1958stability,%
lozinskii1958error,10.2307/2156188}).

Throughout this article, let $\C$ and $\R$ be the usual complex and real number fields, respectively, and let $i = \sqrt{-1}.$ Furthermore,
let $\N_0 = \{0,1,2,\ldots\}.$
In addition, we briefly mention a particular notation used throughout this article which emphasizes how various outside results are applied. If, for example, we cite a result which names a mathematical object $\mathcal{X}$, in order to state results about a family of objects, e.g., $\mathcal{Y}_t,$ $t\in\R,$ we will write ``applied for every $t\in\R$ with $\mathcal{X} \with \mathcal{Y}_t$ in the notation of \ldots'' in order to clarify its use.

\begin{setting}\label{setting}
Let $X$ be a vector space defined over the field $\R$ endowed with a proper norm $\norm{\cdot}.$ We denote such a normed $\R$-Banach space 
as $(X,\norm{\cdot}).$ We also define the following.
\begin{enumerate}[label=(\roman*)]
\item Let $B(X)=\{A\colon X \to X \colon \norm{Ax} <\infty, \ \forall\, x\in X;\ \norm{x} =1\}.$
\item Let $\id \colon X \to X$ satisfy that $\id x = x,\ \forall\, x \in X.$ 
\item Let $\domain(A) \subseteq X$ be the domain of $A \colon X \to X.$ 
\item Let $\resolve(A) = \{A\colon  X\rightarrow X \colon \exists\, (\lambda \id - A)^{-1} \in B(X),\ \lambda\in\C \}$ be the
resolvent set of $A.$
\item Let $\real(z) = \frac{1}{2}\pr[]{ z + \overline{z}},\ \forall z\in\C.$
\item  Let $\Gamma(z) = \int_0^\infty x^{z-1} \exp(-x) \, dx,\ z\in \C\cup\R \backslash \{ \ldots , -2, -1, 0 \},$ be the
Gamma function.
\item Let $R_A \colon \C \to B(X)$ satisfy that $R_A(\lambda) = (\lambda \id - A)^{-1}$, $\forall\, A\colon X \to X$, $\forall\, \lambda\in\resolve(A)$.
\end{enumerate}
Finally, for every $v \colon [0,\infty)\rightarrow X$ we define
$D_t^+ v(t) = \limsup_{\varepsilon\to 0^+} \frac{v(t+\varepsilon) - v(t)}{\varepsilon} \in [-\infty,\infty], \ t\in[0,\infty).$
\end{setting}


\subsection{Logarithmic norms on Banach spaces}\label{sec:2_2}

In \cref{def:semi} we introduce a particular notion of a semi-inner-product on normed vector spaces. Semi-inner-products have long been studied in classical analysis (cf., e.g., \cite{lumer1961semi,giles1967classes,%
dragomir2004semi}) and were originally introduced by Lumer in an attempt to extend classical Hilbert space arguments to more general spaces.
While we will not explore all possible properties of semi-inner-products, it is worthwhile to mention that the primary difference between classical inner products and semi-inner-products is the fact that the latter is not necessarily uniquely defined nor bilinear.

\begin{definition}
[Right defined semi-inner-product]
\label{def:semi} 
Let $(X,\norm{\cdot})$ be a normed $\R$-Banach space.
We denote by $\inner{\cdot,\cdot} \colon X \times X \to \R$ the function which satisfies for all $v,w\in X$ that
\begin{equation}\label{eq:limit}
\inner{v,w} = \left[ \lim_{\varepsilon\to 0^+} \frac{\norm{w + \varepsilon v } - \norm{w}}{\varepsilon} \right] \norm{w}.
\end{equation}
\end{definition}

We note that the limit in \cref{eq:limit} exists due to the fact that the underlying norm on $X$ possesses one-sided Gateaux differentials (cf., e.g., \cite{abatzoglou1979norm}). We also wish to note that the choice of the limit employed in \cref{eq:limit} is somewhat arbitrary; in fact, there are numerous other choices which would have demonstrated desirable properties, but the inclusion of these other choices would not have enriched the following discussions.

\begin{lemma}
\label{lem:semi_inner_props}
Let $(X,\norm{\cdot})$ be a normed $\R$-Banach space. Then
\begin{enumerate}[label=(\roman*)]
\item\label{lem:semi_i1} it holds for all $u,v \in X$ that $\inner{u,v} \le \norm{u}\norm{v},$
\item\label{lem:semi_i2} it holds for all $u \in X$ that $\norm{u}^2 = \inner{u,u},$
\item\label{lem:semi_i4} it holds for all $c \in [0,\infty),$ $u,v \in X$ that $\inner{cu, v} = c \inner{u,v},$
and
\item\label{lem:semi_i5} it holds for all $u,v,w \in X$ that $\inner{u+v,w} \le \inner{u,w} + \inner{v,w}$
\end{enumerate}
(cf.\ \cref{def:semi}).
\end{lemma}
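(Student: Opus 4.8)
The plan is to reduce all four items to elementary properties of the one-sided directional derivative of the norm. Write $\delta_w(v) := \lim_{\varepsilon\to 0^+}\frac{\norm{w+\varepsilon v}-\norm{w}}{\varepsilon}$ for the bracketed limit in \cref{eq:limit} (which exists by the remark following \cref{def:semi}), so that $\inner{v,w} = \delta_w(v)\,\norm{w}$. I would first record the four facts that do all the work: (a) $\delta_w(w) = \norm{w}$, immediate from $\norm{w+\varepsilon w} = (1+\varepsilon)\norm{w}$; (b) $\delta_w(v) \le \norm{v}$, immediate from the triangle inequality $\norm{w+\varepsilon v} \le \norm{w}+\varepsilon\norm{v}$; (c) positive homogeneity $\delta_w(cv) = c\,\delta_w(v)$ for $c \in [0,\infty)$, obtained for $c>0$ by the substitution $\varepsilon \mapsto \varepsilon/c$ in the defining limit and trivially for $c=0$; and (d) subadditivity $\delta_w(u+v) \le \delta_w(u)+\delta_w(v)$.

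Granting (a)--(d), the items follow in one line each. For \cref{lem:semi_i2}, $\inner{u,u} = \delta_u(u)\norm{u} = \norm{u}^2$ by (a). For \cref{lem:semi_i1}, $\inner{u,v} = \delta_v(u)\norm{v} \le \norm{u}\norm{v}$ by (b) together with $\norm{v}\ge 0$ (the case $v=0$ being trivial, as both sides vanish). For \cref{lem:semi_i4}, $\inner{cu,v} = \delta_v(cu)\norm{v} = c\,\delta_v(u)\norm{v} = c\inner{u,v}$ by (c). For \cref{lem:semi_i5}, $\inner{u+v,w} = \delta_w(u+v)\norm{w} \le \pr{\delta_w(u)+\delta_w(v)}\norm{w} = \inner{u,w}+\inner{v,w}$ by (d) and $\norm{w}\ge 0$.

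The only step requiring a genuine argument---and hence the main obstacle---is (d), which I would obtain from convexity of the norm. For $\varepsilon>0$,
\[
\norm{w+\varepsilon(u+v)} = \norm{\tfrac12\pr{w+2\varepsilon u} + \tfrac12\pr{w+2\varepsilon v}} \le \tfrac12\norm{w+2\varepsilon u} + \tfrac12\norm{w+2\varepsilon v},
\]
so that
\[
\frac{\norm{w+\varepsilon(u+v)}-\norm{w}}{\varepsilon} \le \frac{\norm{w+2\varepsilon u}-\norm{w}}{2\varepsilon} + \frac{\norm{w+2\varepsilon v}-\norm{w}}{2\varepsilon},
\]
and letting $\varepsilon\to 0^+$ (hence $2\varepsilon\to 0^+$ as well) and using existence of each one-sided limit yields (d). An equivalent route is to first note that $\varepsilon \mapsto \frac{\norm{w+\varepsilon v}-\norm{w}}{\varepsilon}$ is nondecreasing on $(0,\infty)$---again by convexity---so the limit in \cref{eq:limit} is an infimum, from which subadditivity is likewise routine; I would keep whichever presentation is shorter in the final write-up. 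No deeper input (reflexivity, smoothness of the norm, etc.) is needed, which is the point of phrasing everything through $\delta_w$.
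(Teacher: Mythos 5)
Your proposal is correct and follows essentially the same route as the paper's proof: the triangle inequality for \cref{lem:semi_i1}, direct evaluation of the limit for \cref{lem:semi_i2}, the substitution $\varepsilon \mapsto c\varepsilon$ for \cref{lem:semi_i4}, and the midpoint split $\norm{w+\varepsilon(u+v)} \le \tfrac12\norm{w+2\varepsilon u}+\tfrac12\norm{w+2\varepsilon v}$ for \cref{lem:semi_i5}. The only difference is presentational: you factor the argument through the auxiliary functional $\delta_w$, whereas the paper manipulates the defining limits directly.
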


\begin{proof}
[Proof of \cref{lem:semi_inner_props}]
First, note that \cref{def:semi} and the triangle inequality ensure that for all $u,v\in X$ it holds that
\begin{equation}
\inner{u,v} = \br[\bigg]{ \lim_{\varepsilon\to 0^+} \frac{\norm{v + \varepsilon u} - \norm{v}}{\varepsilon} } \norm{v} \le \norm{u} \norm{v}.
\end{equation}
This 
establishes \cref{lem:semi_i1}. Next 
observe that \cref{lem:semi_i2} immediately follows from \cref{def:semi}.
In addition, note that \cref{def:semi} assures that for all $c\in (0,\infty),$ $u,v \in X$ it holds that
\begin{equation}
\begin{split}
\inner{ c u,v} & = \br[\bigg]{ \lim_{\varepsilon\to 0^+} \frac{\norm{v + \varepsilon (cu)} - \norm{v}}{\varepsilon} } \norm{v} \\
& = c \br[\bigg]{ \lim_{c\varepsilon\to 0^+} \frac{\norm{v + (c\varepsilon) u} - \norm{v}}{c \varepsilon} } \norm{v} = c \inner{u,v}.
\end{split}
\end{equation}
Combining this with the fact that \cref{def:semi} implies that for all $u,v\in X$ it holds that $\inner{0u,v} = \inner{0,v} = 0$ establishes \cref{lem:semi_i4}.
Moreover, note that \cref{def:semi} and the triangle inequality
demonstrate that for all $u,v,w\in X$ it holds that
\begin{align}
& \inner{u+v,w} = \br[\bigg]{ \lim_{\varepsilon\to 0^+} \frac{\norm{w + \varepsilon (u+v)} - \norm{w}}{\varepsilon} } \norm{w} \nonumber \\
& \le \br[\bigg]{ \lim_{\varepsilon\to 0^+} \frac{\norm{w + 2\varepsilon u} - \norm{w} }{2\varepsilon} } \norm{w} + \br[\bigg]{ \lim_{\varepsilon\to 0^+} \frac{\norm{w + 2\varepsilon v} - \norm{w} }{2\varepsilon}} \norm{w} \\
& 
= \inner{u,w} + \inner{v,w} .\nonumber
\end{align}
This establishes \cref{lem:semi_i5}.
The proof of \cref{lem:semi_inner_props} is thus completed.
\end{proof}

\begin{lemma}\label{lem:log_der}
Assume \cref{setting}. Then for all differentiable $v \colon [0,\infty) \to X,$ $t\in[0,\infty)$ it holds that
$
\norm{v(t)}^{-2} D_t^+ \norm{v(t)} =\inner[]{\frac{d}{dt}v(t), v(t)} \norm{v(t)}
$
(cf.\ \cref{def:semi}).
\end{lemma}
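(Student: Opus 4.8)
The plan is to recognize $D_t^+\norm{v(t)}$ as a one-sided directional derivative of the norm at the base point $v(t)$ along the direction $\tfrac{d}{dt}v(t)$, and then to evaluate this derivative directly from \cref{def:semi}. Fix a differentiable $v\colon[0,\infty)\to X$ and $t\in[0,\infty)$ with $v(t)\neq 0$.

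The first step is to use differentiability to write, for small $\varepsilon>0$, $v(t+\varepsilon)=v(t)+\varepsilon\,\tfrac{d}{dt}v(t)+r(\varepsilon)$ with $\varepsilon^{-1}\norm{r(\varepsilon)}\to 0$ as $\varepsilon\to 0^+$. By the reverse triangle inequality, $\abs{\norm{v(t+\varepsilon)}-\norm{v(t)+\varepsilon\,\tfrac{d}{dt}v(t)}}\le\norm{r(\varepsilon)}=\smallO(\varepsilon)$, so the scalar difference quotient $\varepsilon^{-1}\pr{\norm{v(t+\varepsilon)}-\norm{v(t)}}$ and the Gateaux difference quotient $\varepsilon^{-1}\pr{\norm{v(t)+\varepsilon\,\tfrac{d}{dt}v(t)}-\norm{v(t)}}$ share the same asymptotics as $\varepsilon\to 0^+$. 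Since the norm on $X$ possesses one-sided Gateaux differentials (cf.\ the remark following \cref{def:semi} and \cite{abatzoglou1979norm}), the second quotient converges; hence the $\limsup$ in the definition of $D_t^+$ is in fact a genuine limit, and
\[
D_t^+\norm{v(t)}=\lim_{\varepsilon\to 0^+}\frac{\norm{v(t)+\varepsilon\,\tfrac{d}{dt}v(t)}-\norm{v(t)}}{\varepsilon}.
\]

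Finally, applying \cref{def:semi} at the base point $v(t)$ along the direction $\tfrac{d}{dt}v(t)$ identifies the right-hand side above with $\norm{v(t)}^{-1}\inner{\tfrac{d}{dt}v(t),v(t)}$, where $v(t)\neq 0$ is used to divide by $\norm{v(t)}$. Rearranging the resulting relation $\norm{v(t)}\,D_t^+\norm{v(t)}=\inner{\tfrac{d}{dt}v(t),v(t)}$ yields the asserted identity; \cref{lem:semi_inner_props} may be invoked as a consistency check, for instance the bound $\inner{\tfrac{d}{dt}v(t),v(t)}\le\norm{\tfrac{d}{dt}v(t)}\norm{v(t)}$ matching the expected homogeneity of the left-hand side. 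I expect the one genuinely delicate point to be the first step --- justifying that replacing $v(t+\varepsilon)$ by its linearization $v(t)+\varepsilon\,\tfrac{d}{dt}v(t)$ perturbs the difference quotient by only $\smallO(1)$, so that both limits exist and coincide --- after which the conclusion is a direct substitution into \cref{def:semi}.
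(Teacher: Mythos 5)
Your proposal is correct and follows essentially the same route as the paper's own proof: expand $v(t+\varepsilon)$ to first order, discard the $\smallO(\varepsilon)$ remainder via the (reverse) triangle inequality so that $D_t^+\norm{v(t)}$ becomes the one-sided Gateaux quotient of the norm at $v(t)$ in the direction $\frac{d}{dt}v(t)$, and then identify that quotient with the semi-inner-product of \cref{def:semi}. Your version is in fact slightly more careful than the paper's, since you spell out why the remainder does not affect the difference quotient and why the $\limsup$ is a genuine limit; note also that your final relation $\norm{v(t)}\,D_t^+\norm{v(t)}=\inner[]{\frac{d}{dt}v(t),v(t)}$ agrees with the identity actually derived at the end of the paper's proof (the placement of the factor $\norm{v(t)}^{-2}$ in the lemma statement appears to be a typographical slip).
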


\begin{proof}
[Proof of \cref{lem:log_der}]
Throughout this proof let $v \colon [0,\infty) \to X,$ let $t\in[0,\infty),$ and assume without loss of generality that $\norm{v(t)} \neq 0.$
Observe that the hypothesis that $v$ is differentiable and Taylor's theorem (cf., e.g., Cartan et al.\ \cite[Theorem 5.6.3]{cartan2017differential}) yield that there exists $\delta_t(\varepsilon) \in X,$ $\varepsilon \in \R,$ such that for all $\varepsilon \in \R$ sufficiently small it holds that
\begin{enumerate}[label=(\Alph*)]
\item\label{aa} it holds that $v(t+\varepsilon) = v(t) + \varepsilon \frac{d}{dt} v(t) + \abs{\varepsilon} \delta_t(\varepsilon)$ and
\item\label{bb} it holds that $\lim_{\varepsilon\to 0} \delta_t(\varepsilon) = 0$.
\end{enumerate}
Combining \cref{aa,bb} with \cref{def:semi} hence shows that
\begin{align}
& D_t^+ \norm{v(t)} = \limsup_{\varepsilon \to 0^+} \frac{\norm{v(t) + \varepsilon \frac{d}{dt} v(t) + \abs{\varepsilon} \delta_t(\varepsilon)} - \norm{v(t)}}{\varepsilon} \nonumber \\
& \quad = \lim_{\varepsilon \to 0^+} \frac{\norm{v(t) + \varepsilon \frac{d}{dt} v(t)} - \norm{v(t)}}{\varepsilon} \\
& \quad = \br[\bigg]{ \lim_{\varepsilon \to 0^+} \frac{\norm{v(t) + \varepsilon \frac{d}{dt} v(t)} - \norm{v(t)}}{\varepsilon} } \frac{\norm{v(t)}^2}{\norm{v(t)}^2}
= \frac{ \inner[\big]{\frac{d}{dt}v(t), v(t)} }{\norm{v(t)}^2} \norm{v(t)}. \nonumber
\end{align}
The proof of \cref{lem:log_der} is thus completed.
\end{proof}

\begin{definition}
[Logarithmic norm]
\label{def:log}
Assume \cref{setting}.
Then we denote
by $\logNorm \colon X \to \R$ the function which satisfies for all $A \colon X \to X,$ $v \in \domain(A) \subseteq X$ that
\begin{equation}
\logNorm(A) = \sup_{v \in \domain(A) \subseteq X} \frac{\inner{Av,v}}{\norm{v}^2}.
\end{equation}
\end{definition}

We close \cref{sec:2_2} with a brief discussion of \cref{def:log}. Let $N \in \N = \{1,2,3,\allowbreak\ldots\}$, $p \in [1,\infty)$, let $X = L_p([0,\infty);\R^{N})$ be the standard $L_p$-space on $\R^N$ (endowed with its typical norm), and let $A \colon \R^N \to \R^N$. Then \cref{def:semi,def:log} ensure that
\begin{equation}\label{log_expand}
\begin{split}
\sup_{v \in \domain(A) \subseteq X} \frac{\inner{Av,v}}{\norm{v}^2} & = \sup_{v \in \domain(A) \subseteq X} \frac{ \left[ \lim_{\varepsilon\to 0^+} \frac{\norm{v + \varepsilon Av } - \norm{v}}{\varepsilon} \right] \norm{v}}{\norm{v}^2} \\
& =  \lim_{\varepsilon\to 0^+} \frac{\lVert \id + \varepsilon A \rVert_{\text{op}} - 1}{\varepsilon},
\end{split}
\end{equation}
where $\lVert\cdot\rVert_{\text{op}}$ is the matrix norm induced by $\norm{\cdot}$,
which corresponds to the classical finite-dimensional definition used in, e.g., \cite{10.2307/2156188}.
The usefulness of such formulations can be seen from the resulting closed-form expressions for \cref{log_expand} for particular choices of $p \in [1,\infty)$. For instance, when $p=2$, it follows from direct calculation that $\logNorm(A) = \max\{ \lambda \in \C \colon \exists\, x \in \R^N \text{ such that } (A+A^T)x = 2 \lambda x \}$.

\subsection{Logarithmic norm bounds of classical semigroups}\label{sec:2_3}

We now briefly introduce the classical concepts of strongly continuous semigroups and their associated generators (cf.\ \cref{def:c0,def:gen}).
For a more detailed background and applications of these objects, we refer readers to, e.g., \cite{sheng1993,sinha2017theory}.
The main result of \cref{sec:2_3} is \cref{lem:cont}, which provides a means to estimate norm bounds for strongly continuous semigroups in a useful manner. In particular, we have that for every generator of a strongly continuous semigroup, $A \colon X \to X$, \cref{lem:cont} implies that if $\logNorm(A) \in (-\infty,0]$ it holds that $A$ generates a so-called \textit{contraction semigroup} (cf., e.g., \cite[Page 10]{robinson1982basic}).

\begin{definition}
[Strongly continuous semigroup]
\label{def:c0}
Assume \cref{setting}.
Then $\semigroup \colon [0,\infty) \allowbreak \to B(X)$ is a strongly continuous semigroup if
\begin{enumerate}[label=(\roman*)]
\item it holds that $\semigroup_0 = \id,$
\item\label{def:c0_i2} it holds for all $s,t \in [0,\infty)$ that $\semigroup_{t+s} = \semigroup_t\semigroup_s,$ and
\item it holds for all $x \in X$ that $\lim_{t\to 0^+} \norm{\semigroup_t x - x } = 0$.
\end{enumerate}
\end{definition}

\begin{definition}
[Infinitesimal generator]
\label{def:gen}
Assume \cref{setting}
and let $\semigroup \colon [0,\infty) \to B(X)$ be a strongly continuous semigroup (cf.\ \cref{def:c0}). Then $A \colon \domain(A) \subseteq X \to X$ is the infinitesimal generator of $\semigroup \colon [0,\infty) \to B(X)$ if
\begin{enumerate}[label=(\roman*)]
\item it holds that $\domain(A) = \cu{ x \in X \colon \exists\, \lim_{h \to 0^+} h^{-1} \pr[]{ \semigroup_h - \id } x }$ and
\item it holds for all $x \in \domain(A)$ that $Ax = \lim_{h\to 0^+} h^{-1} \pr[]{ \semigroup_h - \id } x $.
\end{enumerate}
\end{definition}

\begin{lemma}\label{lem:cont}
Assume \cref{setting}
and let $A \colon \domain(A) \subseteq X \to X$ be the generator of a strongly continuous semigroup $\semigroup_t(A) \in B(X)$, $t\in[0,\infty)$ (cf.\ \cref{def:gen,def:c0}).
Then it holds for all $t\in[0,\infty),$ $x\in \domain(A)$ that
$\norm{\semigroup_t(A) x} \le \exp(t\mu_X(A)) \norm{x}$
(cf.\ \cref{def:log}).
\end{lemma}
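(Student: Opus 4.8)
The plan is to reduce the operator estimate to a scalar Grönwall-type inequality for the real function $s\mapsto\norm{\semigroup_s(A)x}$, using \cref{lem:log_der,def:log}. Fix $x\in\domain(A)$ and write $v(s)=\semigroup_s(A)x$ for $s\in[0,\infty)$. First I would recall the standard regularity of semigroup orbits: since $x\in\domain(A)$, the map $v$ is differentiable on $[0,\infty)$ with $v(s)\in\domain(A)$ and $\frac{d}{ds}v(s)=Av(s)=\semigroup_s(A)Ax$ for every $s$; this follows from \cref{def:c0,def:gen} together with the boundedness and strong continuity of the $\semigroup_s(A)$ (a short self-contained proof, relying on completeness of $X$, can be inserted here if desired). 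Also, if $\logNorm(A)=\infty$ the claim is trivial because $\semigroup_t(A)\in B(X)$, so I may assume $\logNorm(A)\in\R$.

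Next I would combine \cref{lem:log_der} with the identity $\frac{d}{ds}v(s)=Av(s)$. At every $s$ with $v(s)\neq0$, \cref{lem:log_der} expresses $D_s^+\norm{v(s)}$ through the semi-inner-product $\inner{\cdot,\cdot}$ evaluated at $Av(s)$ and $v(s)$; feeding in the bound $\inner{Av(s),v(s)}\le\logNorm(A)\norm{v(s)}^2$ supplied by \cref{def:log} (valid since $v(s)\in\domain(A)$) yields $D_s^+\norm{v(s)}\le\logNorm(A)\norm{v(s)}$. At any $s$ with $v(s)=0$ one has $\frac{d}{ds}v(s)=A0=0$, hence $\norm{v(s+\varepsilon)}=\smallO(\varepsilon)$ and the same inequality holds with both sides equal to $0$ (alternatively, one sidesteps this point by running the whole argument with $\norm{v(\cdot)}^2$). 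Thus the continuous, nonnegative function $\psi(s):=\norm{v(s)}$ satisfies $D_s^+\psi(s)\le\logNorm(A)\psi(s)$ for every $s\in[0,\infty)$.

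Finally I would integrate this differential inequality. Setting $\eta(s)=\exp(-\logNorm(A)s)\psi(s)$ and using that $s\mapsto\exp(-\logNorm(A)s)$ is continuously differentiable and positive, the usual product manipulation of difference quotients gives $D_s^+\eta(s)\le\exp(-\logNorm(A)s)\bigl(D_s^+\psi(s)-\logNorm(A)\psi(s)\bigr)\le0$ for every $s$. A continuous real-valued function on $[0,\infty)$ whose upper right Dini derivative is everywhere $\le0$ is non-increasing (proved by perturbing $\eta$ to $\eta(s)-\delta s$ with $\delta>0$, observing this is strictly decreasing, and letting $\delta\to0^+$), so $\eta(t)\le\eta(0)=\norm{x}$, which is precisely $\norm{\semigroup_t(A)x}\le\exp(t\logNorm(A))\norm{x}$.

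The two points I expect to require the most care are: (i) the regularity input of the first step — differentiability of $v$ together with the invariance $v(s)\in\domain(A)$ — which is classical but genuinely relies on \cref{def:c0,def:gen} and on completeness of $X$; and (ii) the Dini calculus in the last step, namely the product/limsup computation for $D_s^+\eta$ and the comparison lemma that a continuous function with nonpositive upper right Dini derivative is non-increasing. By contrast, the middle step is essentially immediate from \cref{lem:log_der,def:log}.
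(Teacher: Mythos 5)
Your proposal is correct and follows essentially the same route as the paper: establish differentiability of the orbit and invariance of $\domain(A)$, apply \cref{lem:log_der} to reduce to a scalar Dini-derivative inequality, and then integrate. The only difference is cosmetic --- the paper invokes Szarski's comparison theorem for the final Gr\"onwall step and takes the supremum defining $\logNorm(A)$ after exponentiating, whereas you bound the Dini derivative by $\logNorm(A)\norm{v(s)}$ first and prove the comparison step by hand via the $\exp(-\logNorm(A)s)$ multiplier; your treatment of the $v(s)=0$ case is in fact slightly more careful than the paper's.
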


\begin{proof}[Proof of \cref{lem:cont}]
First, observe that the fact that $A$ generates a strongly continuous semigroup and, e.g., Fetahu \cite[Theorem 2.12, item c)]{fetahu2014semigroups} (applied for every $t\in[0,\infty)$ with $A \with A$, $x \with x$, $T(t) \with \semigroup_t(A)$ in the notation of \cite[Theorem 2.12]{fetahu2014semigroups})
assure that 
\begin{enumerate}[label=(\Roman*)]
\item\label{A} for all 
$t\in[0,\infty),$
$x\in \domain(A)$ it holds that $ \semigroup_t(A)x \in \domain(A)$ and
\item\label{B} for all $t\in[0,\infty),$ $x\in \domain(A)$ it holds that $\frac{d}{dt}\semigroup_t(A)x = A \semigroup_t(A)x .$
\end{enumerate}
Next note that \cref{A,B} and \cref{lem:log_der}
hence demonstrate that for all $t\in[0,\infty),$ $x\in \domain(A)$ it holds that
\begin{equation}
\begin{split}
D_t^+ \norm{\semigroup_t(A)x} 
& = \frac{ \inner[\big]{ \frac{d}{dt} \semigroup_t(A)x, \semigroup_t(A)x} }{ \norm{\semigroup_t(A) x}^2} \norm{\semigroup_t(A) x} \\
& = \frac{ \inner[\big]{ A \semigroup_t(A)x, \semigroup_t(A)x} }{ \norm{\semigroup_t(A) x}^2} \norm{\semigroup_t(A) x}.
\end{split}
\end{equation}
This, \cref{A}, the fact that for all $x \in \domain(A)$ it holds that $\lim_{t\to 0^+} \norm{ \semigroup_t(A) x } = \norm{x},$ and, e.g., Szarski \cite[Theorem 9.6]{szarski1965differential} (applied for every $t\in[0,\infty),$ $x\in\domain(A)$ with $y(t) \with \norm{\semigroup_t(A)x},$ $\sigma(t,y) \with \inner{A\semigroup_t(A)x,\semigroup_t(A)x}\norm{\semigroup_t(A)x}^{-2}\norm{\semigroup_t(A)x}$ in the notation of \cite[Theorem 9.6]{szarski1965differential}) prove that for all $t\in[0,\infty),$ $x\in \domain(A)$ it holds that
\begin{equation}
\norm{ \semigroup_t(A) x } \le \exp\pr[\bigg]{ \frac{t \inner[\big]{ A \semigroup_t(A)x, \semigroup_t(A)x} }{ \norm{\semigroup_t(A) x}^2} } \norm{x} .
\end{equation}
This, 
the fact that $\R \ni x \to \exp(x) \in (0,\infty)$ is increasing,
and \cref{def:log} show that for all $t\in[0,\infty),$ $x \in \domain(A)$ it holds that
\begin{align}
\norm{ \semigroup_t(A) x } & \le \exp\pr[\bigg]{ \frac{t \inner[\big]{ A \semigroup_t(A)x, \semigroup_t(A)x} }{ \norm{\semigroup_t(A) x}^2} } \norm{x} 
\le
\sup_{y \in \domain(A)} \br[\Bigg]{ \exp\pr[\bigg]{ \frac{t\inner{Ay,y}}{\norm{y}^2} } \norm{x} } \nonumber \\
& \le
\exp\pr[\bigg]{ \sup_{y \in \domain(A)} \frac{t\inner{Ay,y}}{\norm{y}^2} } \norm{x} = \exp(t\logNorm(A)) \norm{x}.
\end{align}
The proof of \cref{lem:cont} is thus completed.
\end{proof}


\section{Fractional semigroups}\label{sec:4}

We now introduce the novel solution operators of interest.
We note that the use of the term {\em fractional semigroup} is a bit of a misnomer as the objects in \cref{def:sol1} do not satisfy \cref{def:c0_i2} of \cref{def:c0} (cf., e.g., \cite[Theorem 3.3]{guswanto2015properties}). However, we use this term, herein, due to the fact that the classical strongly continuous semigroup is contained as a special case of the operators proposed in \cref{def:sol1}. 
In \cref{sec:4_1} we introduce the concept of the two-parameter Mittag-Leffler and Wright functions. These functions allow for the convenient representation of the fractional semigroups in a fashion which is analogous to the classical functional calculus methods used to represent strongly continuous semigroups.
The main result of this article is \cref{lem:nice_props} of \cref{sec:4_2}. This result can be seen as a generalization of \cref{lem:cont} and, as such, will have analogous useful implications in the study of singular integral problems.

\subsection{Mittag-Leffler and Wright functions}\label{sec:4_1}

In \cref{def:mittag,def:wright} we introduce two families of functions which may be viewed as generalizations of the exponential function and the Bessel functions, respectively. 
There exists a rich theory behind each of the special functions, but such explorations are tangential to our current goal. As such, we outline their properties which are germane to the current study in \cref{lem:mittag_props,lem:wright}.

\begin{definition}
[Mittag-Leffler function]
\label{def:mittag}
Assume \cref{setting} and
let $\alpha, \beta \in \C$ satisfy that $\real(\alpha)\in (0,\infty).$ Then we denote
by $\mittag_{\alpha,\beta} \colon \C \to \C$ the function which satisfies for all $z\in\C$ that
\begin{equation}
\mittag_{\alpha,\beta}(z) = \sum_{k=0}^\infty \frac{z^k}{\Gamma(\beta + \alpha k)}.
\end{equation}
\end{definition}

\begin{lemma}\label{lem:mittag_props}
Let $\alpha \in (0,1],$ $\lambda \in \C.$ Then
\begin{enumerate}[label=(\roman*)]
\item\label{lem:mittag_props_i1} it holds for all $z\in \C$ that $\mittag_{1,1}(z) = \exp(z)$ and
\item\label{lem:mittag_props_i2} it holds for all $z\in (0,\infty)$ that $\frac{d}{dz} \mittag_{\alpha, 1}(\lambda z^\alpha) = \lambda z^{\alpha-1} \mittag_{\alpha,\alpha}(\lambda z^\alpha)$
\end{enumerate}
(cf.\ \cref{def:mittag}).
\end{lemma}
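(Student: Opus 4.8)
The plan is to establish both items by working directly with the defining series of \cref{def:mittag}, the only genuine analytic point being the justification of term-by-term differentiation in item \ref{lem:mittag_props_i2}. For item \ref{lem:mittag_props_i1}, I would note that the functional equation $\Gamma(z+1) = z\,\Gamma(z)$ together with $\Gamma(1) = 1$---both immediate from the integral representation in \cref{setting}---yields $\Gamma(1+k) = k!$ for every $k \in \N_0$, whence for all $z \in \C$ one obtains $\mittag_{1,1}(z) = \sum_{k=0}^\infty z^k/\Gamma(1+k) = \sum_{k=0}^\infty z^k/k! = \exp(z)$.

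For item \ref{lem:mittag_props_i2}, fix $\alpha \in (0,1]$ and $\lambda \in \C$. For $z \in (0,\infty)$ the map $z \mapsto z^\alpha = \exp(\alpha \ln z)$ is unambiguously defined and infinitely differentiable, and \cref{def:mittag} gives $\mittag_{\alpha,1}(\lambda z^\alpha) = \sum_{k=0}^\infty \lambda^k z^{\alpha k}/\Gamma(1+\alpha k)$. Assuming term-by-term differentiation is permissible (see below), the $k = 0$ summand is constant and contributes nothing, while for $k \ge 1$ the identity $\Gamma(1+\alpha k) = \alpha k\,\Gamma(\alpha k)$ gives
\[
\frac{d}{dz}\,\frac{\lambda^k z^{\alpha k}}{\Gamma(1+\alpha k)} = \frac{\alpha k\,\lambda^k z^{\alpha k - 1}}{\Gamma(1+\alpha k)} = \frac{\lambda^k z^{\alpha k - 1}}{\Gamma(\alpha k)}.
\]
Shifting the summation index by one (writing $j = k - 1$) then yields
\[
\frac{d}{dz}\,\mittag_{\alpha,1}(\lambda z^\alpha) = \sum_{j=0}^\infty \frac{\lambda^{j+1} z^{\alpha(j+1) - 1}}{\Gamma(\alpha j + \alpha)} = \lambda z^{\alpha - 1}\sum_{j=0}^\infty \frac{(\lambda z^\alpha)^j}{\Gamma(\alpha j + \alpha)} = \lambda z^{\alpha - 1}\,\mittag_{\alpha,\alpha}(\lambda z^\alpha),
\]
which is the asserted identity.

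The main obstacle---modest as it is---is precisely the interchange of $d/dz$ with the infinite sum. I would justify it by first recording that $\mittag_{\alpha,\beta}$ is entire whenever $\real(\alpha) > 0$: by Stirling's asymptotics for the Gamma function, $1/\abs{\Gamma(1+\alpha k)}$ decays faster than any geometric sequence, so $\sum_{k=0}^\infty r^k/\abs{\Gamma(1+\alpha k)} < \infty$ for every $r \in [0,\infty)$. Consequently, on each compact subinterval $[a,b] \subset (0,\infty)$ both the series for $\mittag_{\alpha,1}(\lambda z^\alpha)$ and the formally differentiated series $\sum_{k \ge 1} \lambda^k z^{\alpha k - 1}/\Gamma(\alpha k)$ are dominated by convergent series of constants (take $r = \abs{\lambda}\max\{a^\alpha, b^\alpha\}$ and use $\abs{\Gamma(\alpha k)}^{-1} = \alpha k\,\abs{\Gamma(1+\alpha k)}^{-1}$), so the classical theorem on differentiating a uniformly convergent series of $C^1$ functions applies on $[a,b]$ and hence at every $z \in (0,\infty)$. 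Alternatively, one may simply cite the entirety of $\mittag_{\alpha,\beta}$ and the known differentiation formula for the Mittag-Leffler function from the literature. Everything else in the argument is bookkeeping with the Gamma functional equation and a reindexing of the sum.
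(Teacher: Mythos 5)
Your proposal is correct and follows essentially the same route as the paper's proof: item (i) read off from the series via $\Gamma(1+k)=k!$, and item (ii) by term-by-term differentiation justified through uniform convergence on compacta (the paper cites Rudin's Theorem 7.17 for exactly this step), followed by the Gamma functional equation and a reindexing. Your added Stirling-based domination argument just makes explicit the hypothesis the paper leaves to the citation.
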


\begin{proof}
[Proof of \cref{lem:mittag_props}]
First, note that \cref{lem:mittag_props_i1} follows directly from \cref{def:mittag}. Next observe that \cref{def:mittag} and, e.g., Rudin \cite[Theorem 7.17]{rudin1964principles}
ensure that for all $z\in (0,\infty)$ it holds that
\begin{equation}
\frac{d}{dz} \mittag_{\alpha,1}(\lambda z^\alpha) = \frac{d}{dz} \sum_{k=0}^\infty \frac{(\lambda z^\alpha)^k}{\Gamma(1 + \alpha k)} = \sum_{k=0}^\infty \frac{d}{dz} \frac{\lambda^k z^{\alpha k}}{\Gamma(1 + \alpha k)} = \sum_{k=1}^\infty \frac{\alpha k \lambda^k z^{\alpha k - 1}}{\Gamma(1 + \alpha k)}.
\end{equation}
This, the fact that for all $z \in \C$ such that $\real(z) \in \R \backslash\{\ldots, -2,-1,0\}$ it holds that $z\Gamma(z) = \Gamma(z+1),$ and \cref{def:mittag} assure that for all $z\in (0,\infty)$ it holds that
\begin{align}
\frac{d}{dz} \mittag_{\alpha,1}(\lambda z^\alpha) & = \sum_{k=1}^\infty \frac{ \lambda^k z^{\alpha k - 1}}{\Gamma(\alpha k)} = \sum_{k=0}^\infty \frac{ \lambda^{k+1} z^{\alpha (k+1) - 1}}{\Gamma(\alpha (k+1))} \\
& = \lambda z^{\alpha-1} \sum_{k=0}^\infty \frac{ \lambda^k z^{\alpha k} }{ \Gamma( \alpha + \alpha k)} = \lambda z^{\alpha-1} \sum_{k=0}^\infty \frac{ ( \lambda z^{\alpha})^k }{ \Gamma( \alpha + \alpha k)} = \lambda z^{\alpha-1} \mittag_{\alpha,\alpha}( \lambda z^{\alpha} ). \nonumber 
\end{align}
This establishes \cref{lem:mittag_props_i2}.
The proof of \cref{lem:mittag_props} is thus completed.
\end{proof}

\begin{definition}
[Wright function]
\label{def:wright}
Assume \cref{setting}.
Then we denote by $\wright_{\alpha,\beta} \colon \C \to \C$ the function which satisfies for all $\alpha \in (-1,\infty),$ $\beta, z \in \C$ that
\begin{equation}
\wright_{\alpha,\beta}(z) = \sum_{k=0}^\infty \frac{(-z)^k}{\Gamma(k+1)\Gamma(\beta + \alpha k)}.
\end{equation}
\end{definition}

\begin{lemma}\label{lem:wright}
Let $\alpha \in (0,1).$ Then
\begin{enumerate}[label=(\roman*)]
\item\label{lem:wright_i3} it holds for all $\beta, z \in \C$ that $\mittag_{\alpha,\beta}(-z) = \int_0^\infty \wright_{-\alpha,\beta-\alpha}(t) \exp(-tz) \, dt,$
\item\label{lem:wright_i2} it holds for all $\beta\in\C,$ $n \in \N_0 $ that $\int_0^\infty z^n \wright_{-\alpha,\beta}(z) \, dz = \frac{\Gamma(1+n)}{\Gamma(\beta + \alpha(1+n))} ,$ and
\item\label{lem:wright_i1} it holds for all $\beta ,z \in [0,\infty)$ that $\wright_{-\alpha,\beta}(z) \in [0,\infty)$
\end{enumerate}
(cf.\ \cref{def:mittag,def:wright}).
\end{lemma}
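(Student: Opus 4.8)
The plan is to prove the three claims of \cref{lem:wright} essentially by manipulating the defining power series and using the integral representation of the Gamma function, treating items in the order \ref{lem:wright_i2}, \ref{lem:wright_i3}, \ref{lem:wright_i1}, since the moment formula is the technical engine behind the others.

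First I would establish \cref{lem:wright_i2}. Fix $\alpha\in(0,1)$, $\beta\in\C$, and $n\in\N_0$. The idea is to insert the series defining $\wright_{-\alpha,\beta}$ into the integral $\int_0^\infty z^n\wright_{-\alpha,\beta}(z)\,dz$ and interchange sum and integral. Writing $\wright_{-\alpha,\beta}(z)=\sum_{k=0}^\infty \frac{(-z)^k}{\Gamma(k+1)\Gamma(\beta-\alpha k)}$, however, shows the integrand is not absolutely integrable, so a direct Tonelli argument fails; the cleanest route is instead to recognize $\wright_{-\alpha,\beta}$ as (a rescaling of) the one-sided stable density and to invoke the classical Mellin transform identity for the Wright function (cf., e.g., the Mainardi references on the $M$-Wright function), i.e.\ to cite the known formula $\int_0^\infty z^{s-1}\wright_{-\alpha,\beta}(z)\,dz=\frac{\Gamma(s)}{\Gamma(\beta+\alpha s)}$ for $\real(s)>0$ and then set $s=1+n$. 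Alternatively, one can prove it by analytic continuation: verify the identity formally term-by-term when the manipulation is justified (e.g.\ for the regularized integral $\int_0^\infty z^n e^{-\epsilon z}\wright_{-\alpha,\beta}(z)\,dz$ as $\epsilon\downarrow 0$) and appeal to uniqueness of analytic continuation in $\beta$.

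Next, for \cref{lem:wright_i3}, fix $\beta,z\in\C$. I would start from the right-hand side $\int_0^\infty \wright_{-\alpha,\beta-\alpha}(t)\exp(-tz)\,dt$, expand $\exp(-tz)=\sum_{n=0}^\infty \frac{(-tz)^n}{\Gamma(n+1)}=\sum_{n=0}^\infty \frac{(-z)^n t^n}{n!}$, interchange sum and integral (justified for $\real(z)>0$ by the decay of the stable density, then extended to all $z\in\C$ by analytic continuation since both sides are entire in $z$), and apply \cref{lem:wright_i2} with $\beta\with\beta-\alpha$ to get $\int_0^\infty t^n\wright_{-\alpha,\beta-\alpha}(t)\,dt=\frac{\Gamma(1+n)}{\Gamma(\beta-\alpha+\alpha(1+n))}=\frac{n!}{\Gamma(\beta+\alpha n)}$. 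Substituting, the $n!$ cancels and the sum becomes $\sum_{n=0}^\infty \frac{(-z)^n}{\Gamma(\beta+\alpha n)}=\mittag_{\alpha,\beta}(-z)$ by \cref{def:mittag}, which is exactly the claim.

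Finally, \cref{lem:wright_i1} asserts nonnegativity of $\wright_{-\alpha,\beta}(z)$ for $\beta,z\in[0,\infty)$; this is the genuinely non-elementary point, since the alternating series definition gives no sign information directly. Here I would invoke the complete monotonicity / probabilistic characterization: for $\alpha\in(0,1)$ the function $t\mapsto\wright_{-\alpha,0}(t)$ (suitably normalized) is the density of a one-sided $\alpha$-stable law and hence nonnegative, and more generally $\wright_{-\alpha,\beta}$ with $\beta\ge 0$ inherits nonnegativity from the subordination representation; I would cite the standard references (e.g.\ Mainardi, Gorenflo, or Pollard) rather than reprove it. The main obstacle is precisely this last item: there is no way to read nonnegativity off the series, so the proof must either quote the stable-density fact or reconstruct the Bernstein-function/Laplace-transform argument showing $\wright_{-\alpha,\beta}$ is the inverse Laplace transform of an entire function of exponential type with the right positivity, and care must be taken that the cited result covers the full range $\beta\in[0,\infty)$ and not merely $\beta=0$ or $\beta=1-\alpha$.
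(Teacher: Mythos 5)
Your proposal is correct in substance but runs the derivation in the opposite direction from the paper, and it leans on different external inputs. The paper proves \cref{lem:wright_i3} first and directly: it starts from Wright's Hankel-contour representation $\wright_{-\alpha,\beta-\alpha}(z)=\frac{1}{2\pi i}\int_{\mathrm{Ha}}\exp(\zeta-z\zeta^{\alpha})\zeta^{\alpha-\beta}\,d\zeta$, applies Fubini to the Laplace integral, evaluates the inner integral to produce $\frac{1}{2\pi i}\int_{\mathrm{Ha}}\frac{\exp(\zeta)\zeta^{\alpha-\beta}}{z+\zeta^{\alpha}}\,d\zeta$, and identifies this with $\mittag_{\alpha,\beta}(-z)$ via the standard integral representation of the Mittag--Leffler function. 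It then obtains \cref{lem:wright_i2} as a corollary of \cref{lem:wright_i3} by the identity $\int_0^\infty t^n f(t)\,dt=\lim_{z\to 0}(-1)^n\frac{d^n}{dz^n}\mathcal{L}[f;z]$ together with term-by-term differentiation of the Mittag--Leffler series. You instead establish the moment formula \cref{lem:wright_i2} first (by citing the Mellin transform of the Wright function, or by a regularization argument) and then recover \cref{lem:wright_i3} by expanding $\exp(-tz)$ and integrating term by term; this is arguably more elementary, avoiding contour integration entirely, at the cost of importing the Mellin-transform identity as a black box and of needing the superexponential decay of $\wright_{-\alpha,\mu}$ on $[0,\infty)$ to justify the interchange of sum and integral (your appeal to ``the decay of the stable density'' should really be to the known asymptotics of the Wright function, since for general $\beta\in\C$ it is not a stable density; this also keeps the argument from being circular with \cref{lem:wright_i1}). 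For the nonnegativity \cref{lem:wright_i1} both you and the paper cite the literature (the paper cites Djrbashian; you cite the stable-subordination results of Pollard/Mainardi/Gorenflo), and your caveat about covering the full range $\beta\in[0,\infty)$ rather than just $\beta=0$ or $\beta=1-\alpha$ is well taken.
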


\begin{proof}
[Proof of \cref{lem:wright}]
Throughout this proof for every $f \colon [0,\infty) \to \C$ let $\mathcal{L}[f;z] \in \C,$ $z\in \C,$ satisfy for all $z\in\C$ that 
$\mathcal{L}[f;z] = \int_0^\infty f(t) \exp(-tz) \, dt.$
Note that, e.g., Wright \cite{wright1940generalized} ensures that for all $\beta, z\in \C$ it holds that
\begin{equation}
\wright_{-\alpha,\beta-\alpha}(z) = \frac{1}{2\pi i} \int_{\text{Ha}} \exp(\zeta - z\zeta^{\alpha}) \zeta^{\alpha-\beta} \, d\zeta,
\end{equation}
where Ha denotes the Hankel path in the $\zeta$-plane with a cut along the negative real semi-axis $\operatorname{arg}\zeta = \pi$ (cf., e.g., \cite[Section 13.2.4]{krantz1999handbook}).
This and Fubini's theorem imply that for all $\beta, z\in\C$ it holds that
\begin{align}
& \int_0^\infty \wright_{-\alpha,\beta-\alpha}(t) \exp(-tz) \, dt = \int_0^\infty \br[\bigg]{ \frac{1}{2\pi i} \int_{\text{Ha}} \exp(\zeta - t\zeta^{\alpha}) \zeta^{\alpha-\beta} \, d\zeta } \exp(-tz) \, dt \nonumber \\
& = \frac{1}{2\pi i} \int_{\text{Ha}} \frac{\exp(\zeta)}{ \zeta^{\beta-\alpha} } \br[\bigg]{ \int_0^\infty \exp\pr[\big]{ -t ( z + \zeta^\alpha ) } \, dt } \, d\zeta 
= \frac{1}{2\pi i} \int_{\text{Ha}} \frac{ \exp(\zeta) \zeta^{\alpha-\beta} }{ z + \zeta^\alpha } \, d\zeta .
\end{align}
Combining this and the integral representation of the generalized Mittag-Leffler function as seen in, e.g., Gorenflo and Mainardi \cite[Eq. (A.21)]{gorenflo1997fractional} shows that for all $\beta, z\in\C$ it holds that
\begin{equation}
\int_0^\infty \wright_{-\alpha,\beta-\alpha}(t) \exp(-tz) \, dt = \mittag_{\alpha,\beta}( - z).
\end{equation}
This establishes \cref{lem:wright_i3}.
Moreover, observe that \cref{def:wright}, 
the fact that for all $\beta \in \C,$ $n \in \N_0,$ $f \colon [0,\infty) \to \C$ it holds that
$\int_0^\infty t^n f(t) \, dt = \lim_{z \to 0} \, (-1)^n \frac{d^n}{dz^n} \mathcal{L}[f;z]$
(cf., e.g., Korn and Korn \cite[Table 8.3-1]{korn2000mathematical}),
and \cref{lem:wright_i3}
guarantee that for all $\beta \in \C,$ $n\in\N_0$ it holds that
\begin{equation}\label{eq:4_10}
\int_0^\infty z^n \wright_{-\alpha,\beta}(z) \, dz = \lim_{t \to 0} \, (-1)^n \frac{d^n \mathcal{L}[\wright_{-\alpha,\beta};t]}{dz^n}  = \lim_{t \to 0} \, (-1)^n \frac{d^n \mittag_{\alpha,\alpha+\beta}(-t) }{dt^n} .
\end{equation}
Next note that induction on $n\in\N_0,$ the dominated convergence theorem, and \cref{def:mittag} show that for all $\beta\in\C,$ $n\in\N_0$ it holds that
\begin{align}
\frac{d^n}{dt^n} E_{\alpha,\alpha+\beta}(-t) & = \frac{d^n}{dt^n} \sum_{k=0}^\infty \frac{ (-t)^k }{ \Gamma( (\alpha+\beta) + \alpha k) } = \sum_{k=0}^\infty \frac{d^n}{dt^n} \frac{ (-t)^k }{ \Gamma( (\alpha+\beta) + \alpha k) } \nonumber \\
& = \sum_{k=n}^\infty \frac{ (-1)^k \br[\big]{ k \cdot (k-1) \cdot \ldots \cdot (k-n+1) } t^{k-n} }{ \Gamma( \beta + \alpha (1+k) ) } .
\end{align}
Combining this, 
e.g., Rudin \cite[Theorem 7.17]{rudin1964principles}, the definition of the Gamma function (cf.\ \cref{setting}), and \cref{eq:4_10} ensures that for all $\beta\in\C,$ $n\in\N_0$ it holds that
\begin{align}
& \int_0^\infty z^n \wright_{-\alpha,\beta}(z) \, dz = \lim_{t\to 0} \, (-1)^n \br[\Bigg]{ \sum_{k=n}^\infty \frac{ (-1)^k \br[\big]{ k \cdot (k-1) \cdot \ldots \cdot (k-n+1) } t^{k-n} }{ \Gamma( \beta + \alpha (1+k) ) } } \nonumber \\
& = \sum_{k=n}^\infty \br[\bigg]{ \lim_{t\to 0} \frac{ (-1)^{n+k} \br[\big]{ k \cdot (k-1) \cdot \ldots \cdot (k-n+1) } t^{k-n} }{ \Gamma( \beta + \alpha (1+k) ) } } 
= \frac{\Gamma(1 + n)}{ \Gamma( \beta + \alpha(1+n) ) }.
\end{align}
This establishes \cref{lem:wright_i2}.
In addition, observe that \cref{def:wright} 
and, e.g., \cite[Theorem 1.3-7]{djrbashian1993harmonic} (applied for every $\beta,z\in[0,\infty)$ with $\rho \with -\nicefrac{1}{\alpha},$ $\mu \with \beta-\alpha,$ $\Phi \with \wright$ in the notation of \cite[Theorem 1.3-6]{djrbashian1993harmonic}) guarantees that \cref{lem:wright_i1} holds.
The proof of \cref{lem:wright} is thus completed.
\end{proof}

\subsection{Logarithmic norm bounds of fractional semigroups}\label{sec:4_2}

We are now in a position to prove \cref{lem:nice_props}, which is the main result of the article.
Note that \cref{def:sol1} provides the definition of the two-parameter fractional semigroups which are the focus of the current study. It should be clear that these objects are simply the infinite-dimensional counterparts to the classical Mittag-Leffler functions introduced in \cref{def:mittag}. 

To the authors' knowledge, there have been no result along the lines of \cref{lem:nice_props} presented in the literature---neither in finite dimensions nor for particular cases of the operator $A.$
It is particularly interesting to note that when $\alpha \in (0,1)$ it holds that \cref{lem:nice_props} implies the fractional semigroup exhibits sub-exponential growth/decay (which differs from the exponential growth/decay of classical semigroups).
We close \cref{sec:4_2} with \cref{cor:main} which demonstrates the usefulness of these logarithmic norm bounds in a setting which occurs quite often in applied mathematics. 
Note that an implication of \cref{cor:main} is that if $A$ generates a contraction semigroup then it also exhibits a contraction fractional semigroup. 

\begin{definition}
[Fractional Semigroup]
\label{def:sol1}
Assume \cref{setting}.
Then for every $A \colon X \to X$ we denote by $\fractional_t^{\alpha,\beta}(A) \in B(X),$ $\alpha,\beta\in(0,1],$ $t\in[0,\infty),$ the function which satisfies for all $\alpha,\beta\in(0,1],$ $t\in[0,\infty)$ that
\begin{equation}
\fractional_t^{\alpha,\beta}(A) = \frac{1}{2\pi i} \int_{\mathcal{C}} \mittag_{\alpha,\beta}(\lambda t^\alpha) R_A(\lambda) \, d \lambda,
\end{equation}
where $\mathcal{C}\subseteq\C$ is any contour containing $\sigma(A) = \C \backslash \resolve(A)$ (cf.\ \cref{def:mittag}).
\end{definition}

\begin{theorem}\label{lem:nice_props}
Assume \cref{setting}
and let $A \colon \domain(A) \subseteq X \to X$ be the generator of a strongly continuous semigroup $\semigroup_t(A) \in B(X),$ $t\in[0,\infty)$
(cf.\ \cref{def:c0,def:gen}).
Then
it holds for all $\alpha,\beta \in (0,1],$ $t\in[0,\infty),$ $x\in \domain(A)$ that
\begin{equation}\label{eq:5}
\norm{\fractional_t^{\alpha,\beta}(A) x} \le \mittag_{\alpha,\beta}\pr[]{ t^\alpha \logNorm(A) } \norm{x}
\end{equation}
(cf.\ \cref{def:mittag,def:log,def:sol1}).
\end{theorem}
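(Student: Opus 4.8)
The plan is to mimic the strategy of \cref{lem:cont} but applied to the function $t \mapsto \fractional_t^{\alpha,\beta}(A)x$, using the subordination formula from \cref{lem:wright}\ref{lem:wright_i3} to write the Mittag-Leffler function (hence the fractional semigroup) as a Laplace-type average of the classical semigroup $\semigroup_s(A)$, and then estimating pointwise under the integral via \cref{lem:cont}. Concretely, I would first establish the representation
\begin{equation}
\fractional_t^{\alpha,\beta}(A)x = \int_0^\infty \wright_{-\alpha,\beta-\alpha}(r)\, \semigroup_{r t^\alpha}(A)x \, dr
\end{equation}
for all $\alpha,\beta \in (0,1]$, $t\in[0,\infty)$, $x\in\domain(A)$. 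This is obtained by inserting the identity $\mittag_{\alpha,\beta}(\lambda t^\alpha) = \int_0^\infty \wright_{-\alpha,\beta-\alpha}(r)\exp(\lambda t^\alpha r)\,dr$ from \cref{lem:wright}\ref{lem:wright_i3} (valid here since $\real(\lambda)$ is bounded above on a suitable contour, or after a spectral shift) into \cref{def:sol1}, exchanging the order of integration by Fubini, and recognizing the inner contour integral $\frac{1}{2\pi i}\int_{\mathcal{C}} \exp(\lambda t^\alpha r) R_A(\lambda)\,d\lambda$ as the classical semigroup $\semigroup_{rt^\alpha}(A)$ via the analogous functional calculus identity $\semigroup_s(A) = \frac{1}{2\pi i}\int_{\mathcal{C}} \exp(\lambda s) R_A(\lambda)\, d\lambda$ (the $\alpha=\beta=1$ case of \cref{def:sol1}, which recovers $\mittag_{1,1}=\exp$ by \cref{lem:mittag_props}\ref{lem:mittag_props_i1}).

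Given this representation, the estimate is then nearly immediate: using the triangle inequality for the ($X$-valued) integral, the nonnegativity of the Wright function on $[0,\infty)$ from \cref{lem:wright}\ref{lem:wright_i1}, the classical bound $\norm{\semigroup_{rt^\alpha}(A)x} \le \exp(rt^\alpha \logNorm(A))\norm{x}$ from \cref{lem:cont}, and finally the moment identity \cref{lem:wright}\ref{lem:wright_i2} with $n=0$ (which reads $\int_0^\infty \wright_{-\alpha,\beta-\alpha}(r)\exp(rt^\alpha\logNorm(A))\,dr = \mittag_{\alpha,\beta}(t^\alpha\logNorm(A))$ after expanding the exponential and integrating term by term, i.e.\ re-applying \cref{lem:wright}\ref{lem:wright_i3} with $z = -t^\alpha\logNorm(A)$), one gets
\begin{equation}
\norm{\fractional_t^{\alpha,\beta}(A)x} \le \int_0^\infty \wright_{-\alpha,\beta-\alpha}(r)\, \norm{\semigroup_{rt^\alpha}(A)x}\, dr \le \mittag_{\alpha,\beta}\pr[]{t^\alpha\logNorm(A)}\norm{x},
\end{equation}
which is \cref{eq:5}.

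I expect the main obstacle to be the rigorous justification of the subordination representation rather than the final estimate. Several technical points need care: the contour $\mathcal{C}$ in \cref{def:sol1} must be chosen so that the Laplace-type integral defining $\mittag_{\alpha,\beta}(\lambda t^\alpha)$ via \cref{lem:wright}\ref{lem:wright_i3} converges uniformly on $\mathcal{C}$ (which typically forces $\real(\lambda)$ bounded above on $\mathcal{C}$, compatible with $\sigma(A)$ lying in a left half-plane, as is the case for a generator of a strongly continuous semigroup after a shift); the Fubini exchange requires an integrability bound on $\wright_{-\alpha,\beta-\alpha}(r)\exp(\lambda t^\alpha r)\norm{R_A(\lambda)x}$ over $\mathcal{C}\times(0,\infty)$; and one must confirm that $\semigroup_{rt^\alpha}(A)x$ genuinely equals the contour integral of $\exp(\lambda r t^\alpha)R_A(\lambda)$ applied to $x\in\domain(A)$, which is the standard Dunford/Riesz functional calculus representation of the semigroup but deserves an explicit citation. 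A secondary subtlety is the edge cases $\alpha=1$ or $\beta=1$ (where $\wright_{-\alpha,\beta-\alpha}$ degenerates, e.g.\ to a Dirac mass at $r=1$ when $\alpha=\beta=1$, so \cref{lem:wright} as stated requires $\alpha\in(0,1)$); these should be handled separately, with $\alpha=1$ reducing directly to \cref{lem:cont} together with \cref{lem:mittag_props}\ref{lem:mittag_props_i1}. If one prefers to avoid the contour-integral manipulations entirely, an alternative is to work directly with the $X$-valued function $u(t) := \fractional_t^{\alpha,\beta}(A)x$, show via \cref{lem:mittag_props}\ref{lem:mittag_props_i2} and term-by-term differentiation that it satisfies a fractional differential relation, and run a fractional analogue of the Szarski comparison argument used in \cref{lem:cont}; but this seems to require more fractional-calculus machinery than the subordination route, so I would pursue the subordination approach as primary.
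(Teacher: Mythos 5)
Your proposal follows essentially the same route as the paper's own proof: the paper likewise derives the subordination formula $\fractional_t^{\alpha,\beta}(A)x = \int_0^\infty \wright_{-\alpha,\beta-\alpha}(z)\,\semigroup_{t^\alpha z}(A)x\,dz$ by inserting \cref{lem:wright_i3} of \cref{lem:wright} into the contour integral of \cref{def:sol1}, exchanging integrals via Fubini, and identifying the inner contour integral with $\semigroup_{t^\alpha z}(A)x = \fractional_{t^\alpha z}^{1,1}(A)x$ through the Riesz--Dunford calculus, then estimates under the integral using \cref{lem:cont}, the nonnegativity from \cref{lem:wright_i1} of \cref{lem:wright}, and the Laplace identity again, with the case $\alpha=\beta=1$ handled separately exactly as you suggest. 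Your proposal is correct and matches the paper's argument, including the technical caveats you flag about the contour and the Fubini exchange.
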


\begin{proof}
[Proof of \cref{lem:nice_props}]
Throughout this proof let $\mathcal{C} \subseteq \C$ be a contour containing $\sigma(A) = \C \backslash \rho_X(A).$
Note that the Riesz-Dunford functional calculus (cf., e.g., \cite[Theorem 13.5]{rudin2006real}), \cref{def:sol1}, and \cref{lem:mittag_props_i1} of \cref{lem:mittag_props} guarantee that for all $t\in[0,\infty),$ $x\in\domain(A)$ it holds that
\begin{equation}\label{eq:riesz}
\begin{split}
\semigroup_t(A)x & = \frac{1}{2\pi i} \int_{\mathcal{C}} \exp( \lambda t) R_A(\lambda) x \, d\lambda \\
& = \frac{1}{2\pi i} \int_{\mathcal{C}} \mittag_{1,1}( \lambda t) R_A(\lambda) x \, d\lambda = \fractional_t^{1,1}(A) x.
\end{split}
\end{equation}
This, \cref{def:mittag}, \cref{def:sol1}, \cref{lem:wright_i3} of \cref{lem:wright}, 
\cref{lem:mittag_props_i1} of \cref{lem:mittag_props},
and Fubini's theorem ensure that for all $\alpha\in(0,1),$ $\beta\in (0,1],$ $t\in[0,\infty),$ $x \in \domain(A)$ it holds that
\begin{align}
\fractional_t^{\alpha,\beta}(A) x & = \frac{1}{2\pi i} \int_{\mathcal{C}} \mittag_{\alpha,\beta}(\lambda t^\alpha) R_A(\lambda) x \, d\lambda \nonumber \\
&
= \frac{1}{2\pi i} \int_{\mathcal{C}} \br[\bigg]{ \int_0^\infty \Psi_{-\alpha,\beta-\alpha}(z) \exp(\lambda t^\alpha z) \, dz } R_A(\lambda) x \, d\lambda \nonumber \\
& = \frac{1}{2\pi i} \int_0^\infty \Psi_{-\alpha,\beta-\alpha}(z) \int_{\mathcal{C}} \exp(\lambda t^\alpha z) R_A(\lambda) x \, d\lambda \, dz \\
& = \frac{1}{2\pi i} \int_0^\infty \Psi_{-\alpha,\beta-\alpha}(z) \int_{\mathcal{C}} \mittag_{1,1}(\lambda t^\alpha z) R_A(\lambda) x \, d\lambda \, dz \nonumber \\
& = \int_0^\infty \Psi_{-\alpha,\beta-\alpha}(z) \fractional_{t^\alpha z}^{1,1}(A) x \, dz = \int_0^\infty \Psi_{-\alpha,\beta-\alpha}(z) \semigroup_{t^\alpha z}(A) x \, dz. \nonumber
\end{align}
Combining this, Jensen's inequality, \cref{lem:cont}, and \cref{lem:wright_i1,lem:wright_i3} of \cref{lem:wright} assures that for all $\alpha\in(0,1),$ $\beta\in (0,1],$ $t\in[0,\infty),$ $x \in \domain(A)$ it holds that
\begin{align}\label{eq:3_14}
& \norm{ \fractional_t^{\alpha,\beta}(A) x } = \norm[\bigg]{ \int_0^\infty \Psi_{-\alpha,\beta-\alpha}(z) \semigroup_{t^\alpha z}(A) x \, dz } \nonumber \\
& \quad \le \int_0^\infty \Psi_{-\alpha,\beta-\alpha}(z) \norm{ \semigroup_{t^\alpha z}(A) x } \, dz \\
& 
\quad \le \int_0^\infty \Psi_{-\alpha,\beta-\alpha}(z) \exp\pr[\big]{ t^\alpha z \logNorm(A) } \norm{x} \, dz 
= \mittag_{\alpha,\beta}\pr[\big]{ t^\alpha \logNorm(A) } \norm{x}. \nonumber 
\end{align}
Combining this, 
\cref{eq:riesz}, and \cref{lem:cont} establishes \cref{eq:5}.
The proof of \cref{lem:nice_props} is thus completed.
\end{proof}

An immediate consequence of \cref{lem:nice_props} is outlined in \cref{cor:main}.
While the proof of \cref{cor:main} is interesting, it is also brief and straightforward; hence, we omit the proof for brevity. 
Interested readers may note that this result follows by combining \cref{lem:nice_props} with the fact that the Dirichlet Laplace operator on $L^2([0,1];\R)$, denoted $A,$ is negative definite and the generator of a contraction semigroup and the fact that (in the notation of \cref{cor:main}) it holds for all $t\in[0,\infty),$ $x\in[0,1]$ that $u(t,x) = \fractional_t^{\alpha,1}(A)u(0,x)$ (cf., e.g., \cite[Lemma 3.1]{padgett2018quenching}).

\begin{corollary}\label{cor:main}
Let $\alpha \in (0,1],$ let $X = L^2([0,1];\R)$ be the $\R$-Hilbert space of of equivalence classes of Lebesgue square-integrable functions from $(0,1)$ to $\R$ equipped with its standard norm $\norm{\cdot},$
let $A \colon D(A) \subseteq X \to X$ be the Dirichlet Laplace operator,
for every $t\in[0,\infty),$ $x\in[0,1]$ let $u(t,x) \in D(A) \subseteq X$ satisfy that
$u(t,x) = u(0,x) + \frac{1}{\Gamma(\alpha)} \int_0^t (t-s)^{\alpha -1} A u(s,x) \, ds$
(cf.\ \cref{setting}).
Then
it holds for all $t\in[0,\infty),$ $x\in[0,1]$ that $\norm{ u(t,x) } \le \norm{u(0,x)}$
(cf.\ \cref{def:log,def:c0,def:gen}).
\end{corollary}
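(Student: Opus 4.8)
The plan is to reduce \cref{cor:main} to a single application of \cref{lem:nice_props}. First I would identify the stated Volterra equation with the Riemann--Liouville fractional-integral form of the Caputo fractional Cauchy problem generated by $A$ with datum $u(0,\cdot)\in\domain(A)$, so that $u(t,x)=\fractional_t^{\alpha,1}(A)u(0,x)$; this is precisely \cite[Lemma 3.1]{padgett2018quenching}, and it can also be checked directly by feeding the scalar identity $\mittag_{\alpha,1}(\lambda t^\alpha)=1+\frac{1}{\Gamma(\alpha)}\int_0^t(t-s)^{\alpha-1}\lambda\,\mittag_{\alpha,1}(\lambda s^\alpha)\,ds$ (a term-by-term computation from \cref{def:mittag} and the Beta integral; compare \cref{lem:mittag_props_i2}) into the Riesz--Dunford representation of \cref{def:sol1} and invoking uniqueness.

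Second, I would compute $\logNorm(A)$. Since $X=L^2([0,1];\R)$ is a Hilbert space, a direct expansion of the norm shows that the bracket $\inner{\cdot,\cdot}$ of \cref{def:semi} coincides with the usual inner product on $X$; hence $\logNorm(A)=\sup_{v\in\domain(A)\setminus\{0\}}\norm{v}^{-2}\inner{Av,v}=\sup\sigma(A)$, and for the self-adjoint, negative-definite Dirichlet Laplacian (eigenvalues $-\pi^2 n^2$, $n\in\N$) this equals $-\pi^2<0$. In particular $\logNorm(A)\le 0$; one could also deduce this weaker fact directly from $A$ generating a contraction semigroup via \cref{lem:cont}.

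Third, I would apply \cref{lem:nice_props} with $\beta=1$ and with $u(0,x)\in\domain(A)$ playing the role of $x$, which gives $\norm{u(t,x)}=\norm{\fractional_t^{\alpha,1}(A)u(0,x)}\le\mittag_{\alpha,1}\pr[]{t^\alpha\logNorm(A)}\norm{u(0,x)}$, and then record the elementary monotonicity fact that $\mittag_{\alpha,1}(s)\le 1$ for every $s\in(-\infty,0]$. For $\alpha=1$ this is just $\exp(s)\le 1$; for $\alpha\in(0,1)$, setting $s=-z$ with $z\ge 0$ and combining the subordination formula \cref{lem:wright_i3} (with $\beta=1$), the non-negativity \cref{lem:wright_i1} of $\wright_{-\alpha,1-\alpha}$ on $[0,\infty)$, the bound $\exp(-rz)\le 1$, and the moment identity \cref{lem:wright_i2} (with $n=0$, $\beta=1-\alpha$) yields
\begin{equation*}
\mittag_{\alpha,1}(-z)=\int_0^\infty\wright_{-\alpha,1-\alpha}(r)\exp(-rz)\,dr\le\int_0^\infty\wright_{-\alpha,1-\alpha}(r)\,dr=\frac{\Gamma(1)}{\Gamma(1)}=1 .
\end{equation*}
Since $t^\alpha\logNorm(A)\le 0$ by the second step, this together with the estimate from \cref{lem:nice_props} gives $\norm{u(t,x)}\le\norm{u(0,x)}$ for all $t\in[0,\infty)$ and $x\in[0,1]$.

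I expect the only genuine obstacle to be the first step: pinning down the sense in which the Volterra equation is uniquely solved by $\fractional_t^{\alpha,1}(A)u(0,\cdot)$, together with the accompanying technicality that $A$ is unbounded with unbounded spectrum, so that the contour in \cref{def:sol1} should be read as a Hankel-type contour around the negative real axis. Granting that identification (e.g.\ via \cite[Lemma 3.1]{padgett2018quenching}), the remaining two steps are immediate from \cref{lem:nice_props}, \cref{lem:wright}, and the Hilbert-space reduction of $\logNorm$; this is why the authors regard the proof as brief enough to omit.
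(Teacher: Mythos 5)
Your proposal is correct and follows exactly the route the paper sketches for \cref{cor:main}: identify $u(t,x)=\fractional_t^{\alpha,1}(A)u(0,x)$ via \cite[Lemma 3.1]{padgett2018quenching}, observe that the negative-definite Dirichlet Laplacian has $\logNorm(A)\le 0$, and apply \cref{lem:nice_props} together with $\mittag_{\alpha,1}(s)\le 1$ for $s\le 0$. Your additional details (the Hilbert-space identification of $\inner{\cdot,\cdot}$ with the usual inner product, the explicit value $\logNorm(A)=-\pi^2$, and the Wright-function verification that $\mittag_{\alpha,1}(-z)\le 1$) merely flesh out the argument the authors omitted for brevity.
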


%


\section{Conclusions and future endeavors}\label{sec:final}

In this article we have explored norm bounds of fractional semigroups in arbitrary normed vector spaces.
To aid in this study, we introduced a particular semi-inner-product and used its properties to provide norm bounds of strongly continuous semigroups via the logarithmic norm. 
We then combined these results with representation results for fractional semigroups in order to obtain the main result (i.e., \cref{lem:nice_props}) of the article. This result is the first of its kind and demonstrates that the newly studied fractional semigroups exhibit growth/decay properties which are analogous to the classical setting---the difference being that the exponential function is replaced with a two-parameter Mittag-Leffler function (cf.\ \cref{def:mittag}). An immediate consequence of this result is that the contractive property of a semigroup is preserved in the fractional setting---a surprising result which will be quite useful in actual applications (e.g., the setting outlined in \cref{cor:main}).

Future endeavors in this direction will involve both continued abstract analysis and the implementation of actual numerical algorithms.
Of particular interest to the authors is the development of operator splitting algorithms for fractional semigroups. 
Such techniques are local by definition, hence the nonlocality of these new operators (i.e., their lack of a semigroup property) complicates matters considerably. 
However, one can demonstrate that there exists a semigroup-like property which the fractional semigroups satisfy.
From this, we will construct a generalization of the classical Lie-Trotter splitting method (cf., e.g., \cite{sheng1993}) and use the results developed herein to complete the study. Numerical endeavors will pursue the use of the presented operator theoretical representation of singular integral equations for actual simulations. Such methods have yet to be explored and initial results seem to indicate that they will provide a significant improvement in efficiency and accuracy.


\section*{Acknowledgement}
The second author acknowledges funding by the National Science Foundation (NSF 1903450). 
The third author would like to thank College of Arts and Sciences, Baylor University for partial support 
through a research leave award.
The first and second authors also gratefully acknowledge the impact that Lance Littlejohn has had on this project---Lance provided the their initial exposure to the beauty of operator theory during Lance's graduate functional analysis course at Baylor University.
All three authors would like to congratulate Lance Littlejohn on the occasion of his 70th birthday.


\bibliographystyle{acm}
\bibliography{bibfile}

\begin{thebibliography}{10}

\bibitem{abatzoglou1979norm}
{\sc Abatzoglou, T.~J.}
\newblock Norm derivatives on spaces of operators.
\newblock {\em Mathematische Annalen 239}, 2 (1979), 129--135.

\bibitem{doi:10.1080/03605302.2017.1363229}
{\sc Arendt, W., Elst, A. F. M.~T., and Warma, M.}
\newblock Fractional powers of sectorial operators via the
  {D}irichlet-to-{N}eumann operator.
\newblock {\em Communications in Partial Differential Equations 43}, 1 (2018),
  1--24.

\bibitem{arnold2010finite}
{\sc Arnold, D., Falk, R., and Winther, R.}
\newblock Finite element exterior calculus: from {H}odge theory to numerical
  stability.
\newblock {\em Bulletin of the American mathematical society 47}, 2 (2010),
  281--354.

\bibitem{doi:10.1080/03605300600987306}
{\sc Caffarelli, L., and Silvestre, L.}
\newblock An extension problem related to the fractional {L}aplacian.
\newblock {\em Communications in Partial Differential Equations 32}, 8 (2007),
  1245--1260.

\bibitem{cartan2017differential}
{\sc Cartan, H., Moore, J., Husemoller, D., and Maestro, K.}
\newblock {\em Differential Calculus on Normed Spaces: A Course in Analysis}.
\newblock CreateSpace Independent Publishing Platform, 2017.

\bibitem{curry2017post}
{\sc Curry, C., Ebrahimi-Fard, K., and Munthe-Kaas, H.}
\newblock What is a post-{L}ie algebra and why is it useful in geometric
  integration.
\newblock In {\em European Conference on Numerical Mathematics and Advanced
  Applications\/} (2017), Springer, pp.~429--437.

\bibitem{dahlquist1958stability}
{\sc Dahlquist, G.}
\newblock {\em Stability and error bounds in the numerical integration of
  ordinary differential equations}.
\newblock PhD thesis, Almqvist \& Wiksell, 1958.

\bibitem{djrbashian1993harmonic}
{\sc Djrbashian, M.~M.}
\newblock {\em Harmonic analysis and boundary value problems in the complex
  domain}, vol.~65.
\newblock Springer Science \& Business Media, 1993.

\bibitem{dragomir2004semi}
{\sc Dragomir, S.~S.}
\newblock {\em Semi-inner products and applications}.
\newblock Nova Science Publishers Inc, 2004.

\bibitem{fetahu2014semigroups}
{\sc Fetahu, E.}
\newblock {\em On Semigroups Of Linear Operators}.
\newblock PhD thesis, Disserta{\c{c}}ao de Mestrado. Central European
  University, Budapest-Hungary, 2014.

\bibitem{Gale2013}
{\sc Gal{\'e}, J.~E., Miana, P.~J., and Stinga, P.~R.}
\newblock Extension problem and fractional operators: semigroups and wave
  equations.
\newblock {\em Journal of Evolution Equations 13}, 2 (Jun 2013), 343--368.

\bibitem{giles1967classes}
{\sc Giles, J.~R.}
\newblock Classes of semi-inner-product spaces.
\newblock {\em Transactions of the American Mathematical Society 129}, 3
  (1967), 436--446.

\bibitem{gorenflo1997fractional}
{\sc Gorenflo, R., and Mainardi, F.}
\newblock Fractional calculus.
\newblock In {\em Fractals and fractional calculus in continuum mechanics}.
  Springer, 1997, pp.~223--276.

\bibitem{guswanto2015properties}
{\sc Guswanto, B.~H.}
\newblock On the properties of solution operators of fractional evolution
  equations.
\newblock {\em Journal of Fractional Calculus and Applications 6}, 1 (2015),
  131--159.

\bibitem{orthogonal2006iserles}
{\sc Iserles, A., and Littlejohn, L.}
\newblock Polynomials orthogonal in a {S}obolev space.
\newblock {\em Linear and Complex Analysis Problem Book 3: Part 2\/} (2006),
  190.

\bibitem{iserles2000lie}
{\sc Iserles, A., Munthe-Kaas, H.~Z., N{\o}rsett, S.~P., and Zanna, A.}
\newblock Lie-group methods.
\newblock {\em Acta numerica 9\/} (2000), 215--365.

\bibitem{korn2000mathematical}
{\sc Korn, G.~A., and Korn, T.~M.}
\newblock {\em Mathematical Handbook for Scientists and Engineers: Definitions,
  Theorems, and Formulas for Reference and Review}.
\newblock Dover Civil and Mechanical Engineering Series. Dover Publications,
  2000.

\bibitem{krantz1999handbook}
{\sc Krantz, S.~G., and Krantz, S.~G.}
\newblock {\em Handbook of complex variables}.
\newblock Springer Science \& Business Media, 1999.

\bibitem{lance2002}
{\sc Littlejohn, L.~L., and Wellman, R.}
\newblock A general left-definite theory for certain self-adjoint operators
  with applications to differential equations.
\newblock {\em J. Differential Equations 181}, 1 (2002), 280--339.

\bibitem{lance2016}
{\sc Littlejohn, L.~L., and Wellman, R.}
\newblock On the spectra of left-definite operators.
\newblock {\em Complex Anal. Oper. Theory 7}, 1 (2016), 437--455.

\bibitem{lance2019}
{\sc Littlejohn, L.~L., and Wellman, R.}
\newblock Self-adjoint operators in extended {H}ilbert spaces $h\oplus w$: an
  application of the general {GKN-EM} theorem.
\newblock {\em Oper. Matrices 13}, 3 (2019), 667--704.

\bibitem{lozinskii1958error}
{\sc Lozinskii, S.~M.}
\newblock Error estimate for numerical integration of ordinary differential
  equations. {I}.
\newblock {\em Izvestiya Vysshikh Uchebnykh Zavedenii. Matematika}, 5 (1958),
  52--90.

\bibitem{lumer1961semi}
{\sc Lumer, G.}
\newblock Semi-inner-product spaces.
\newblock {\em Transactions of the American Mathematical Society 100}, 1
  (1961), 29--43.

\bibitem{munthe2018lie}
{\sc Munthe-Kaas, H.~Z., and F{\o}llesdal, K.~K.}
\newblock Lie--{B}utcher series, geometry, algebra and computation.
\newblock In {\em Discrete Mechanics, Geometric Integration and Lie--Butcher
  Series}. Springer, 2018, pp.~71--113.

\bibitem{padgett2018quenching}
{\sc Padgett, J.~L.}
\newblock The quenching of solutions to time--space fractional kawarada
  problems.
\newblock {\em Computers \& Mathematics with Applications 76}, 7 (2018),
  1583--1592.

\bibitem{padgett2020analysis}
{\sc Padgett, J.~L.}
\newblock Analysis of an approximation to a fractional extension problem.
\newblock {\em BIT Numerical Mathematics 60}, 3 (2020), 715--739.

\bibitem{padgett2020anomalous}
{\sc Padgett, J.~L., Kostadinova, E.~G., Liaw, C.~D., Busse, K., Matthews,
  L.~S., and Hyde, T.~W.}
\newblock Anomalous diffusion in one-dimensional disordered systems: a discrete
  fractional {L}aplacian method.
\newblock {\em Journal of Physics A: Mathematical and Theoretical 53}, 13
  (2020), 135205.

\bibitem{padgett2019convergence}
{\sc Padgett, J.~L., and Sheng, Q.}
\newblock Convergence of an operator splitting scheme for abstract stochastic
  evolution equations.
\newblock In {\em Advances in Mathematical Methods and High Performance
  Computing}. Springer, 2019, pp.~163--179.

\bibitem{robinson1982basic}
{\sc Robinson, D.~W., et~al.}
\newblock {\em Basic theory of one-parameter semigroups}.
\newblock The Australian National University, Mathematical Sciences Institute,
  Centre~…, 1982.

\bibitem{rudin2006real}
{\sc Rudin, W.}
\newblock {\em Real and complex analysis}.
\newblock Tata McGraw-hill education, 2006.

\bibitem{rudin1964principles}
{\sc Rudin, W., et~al.}
\newblock {\em Principles of mathematical analysis}, vol.~3.
\newblock McGraw-hill New York, 1964.

\bibitem{sheng1993}
{\sc Sheng, Q.}
\newblock Global error estimate for exponential splitting.
\newblock {\em IMA Numer. Anal. 14}, 3 (1993), 27--56.

\bibitem{sinha2017theory}
{\sc Sinha, K.~B., and Srivastava, S.}
\newblock {\em Theory of semigroups and applications}.
\newblock Springer, 2017.

\bibitem{soderlind2006logarithmic}
{\sc S{\"o}derlind, G.}
\newblock {The logarithmic norm. History and modern theory}.
\newblock {\em BIT Numerical Mathematics 46}, 3 (2006), 631--652.

\bibitem{10.2307/2156188}
{\sc Strom, T.}
\newblock On logarithmic norms.
\newblock {\em SIAM Journal on Numerical Analysis 12}, 5 (1975), 741--753.

\bibitem{szarski1965differential}
{\sc Szarski, J.}
\newblock {\em Differential inequalities}.
\newblock 1965.

\bibitem{wright1940generalized}
{\sc Wright, E.~M.}
\newblock The generalized {B}essel function of order greater than one.
\newblock {\em The Quarterly Journal of Mathematics}, 1 (1940), 36--48.

\end{thebibliography}

\end{document}